        \title{Algebraic K-theory of completed Kac-Moody groups}
       \author{Bartels, A.}
       \address{Mathematical Institute\\University M\"unster\\ Einsteinstr.~62, D-48149 M\"unster} 
       \email{a.bartels@uni-muenster.de}
        \urladdr{http://www.math.uni-muenster.de/u/bartelsa} 
        \author{L\"uck, W.}
      \address{Mathematisches Institut der Universit\"at Bonn\\
                Endenicher Allee 60\\
                D-53115 Bonn}
        \email{wolfgang.lueck@him.uni-bonn.de}
      \urladdr{http://www.him.uni-bonn.de/lueck}
      \author{Witzel, S.}
       \address{Mathematical Institute\\ Giessen University\\Arndtstr. 2,D-35392 Giessen} 
       \email{Stefan.Witzel@math.uni-giessen.de}
       \urladdr{http://www.switzel.eu/}
         \date{\today}
          \keywords{Kac-Moody-groups, Hecke algebras, algebraic K-theory}
\subjclass[2020]{primary 20G44, secondary: 20C08,19D50}
  \DeclareMathAlphabet{\matheurm}{U}{eur}{m}{n}
  \newcommand{\IN}{\mathbb{N}}
  \newcommand{\IQ}{\mathbb{Q}}
  \newcommand{\IR}{\mathbb{R}}
  \newcommand{\cala}{\mathcal{A}}
  \newcommand{\calc}{\mathcal{C}}
  \newcommand{\calh}{\mathcal{H}}
  \newcommand{\calo}{\mathcal{O}}
  \newcommand{\bfG}{{\mathbf G}}
   \newcounter{commentcounter}
   \newcounter{wlcommentcounter}
  \newcommand{\ignore}[1]{}
  \theoremstyle{plain}
  \newtheorem{theorem}{Theorem}[section]
  \newtheorem{lemma}[theorem]{Lemma}
  \newtheorem{corollary}[theorem]{Corollary}
  \newtheorem{proposition}[theorem]{Proposition}
  \newtheorem{addendum}[theorem]{Addendum}
  \newtheorem*{question*}{Question}
  \newtheorem*{theorem*}{Theorem}
  \newtheorem*{corollary*}{Corollary}
  \theoremstyle{definition}
  \newtheorem*{definition*}{Definition}
  \theoremstyle{remark}
  \newtheorem*{remark*}{Remark}
\let\c@equation=\c@theorem\makeatother
  \DeclareMathOperator{\Aut}{Aut}
  \DeclareMathOperator{\id}{id}
  \DeclareMathOperator{\FS}{FS}
  \DeclareMathOperator{\Kgroup}{K}
  \DeclareMathOperator{\Sub}{Sub}
  \DeclareMathOperator*{\colimunder}{colim}
  \newcommand{\COP}{{\calc\hspace{-1pt}\mathrm{op}}}
  \newcommand{\CVCYC}{{\calc\hspace{-1pt}\mathrm{vcy}}}
\begin{document}

  \begin{abstract}
    We extend results for the K-theory of Hecke algebras of reductive $p$-adic groups to completed Kac-Moody groups.   
  \end{abstract}
  
   \maketitle

  \section*{Introduction}
\refstepcounter{section}
  
  A td-group $G$ is a topological group where the unit admits a countable neighborhood basis consisting of compact open subgroups.
  For a ring $R$ containing $\IQ$, the Hecke algebra $\calh(G;R)$ is the algebra\footnote{The product is the convolution with respect to a $\IQ$-valued Haar measure $\mu$ on $G$; the resulting algebra is up to canonical isomorphism independent of the choice of $\mu$.} of compactly supported locally constant functions $G \to R$. 
  In~\cite{Bartels-Lueck(2023K-theory_red_p-adic_groups)} the algebraic K-theory of $\calh(G;R)$ is studied in the spirit of the Farrell--Jones conjecture. 
  If $R$ is uniformly regular\footnote{This means that $R$ is noetherian and that there is $N$ such that every finitely generated $R$-module admits a projective resolution of length at most $N$ by finitely generated modules; in particular fields are uniformly regular.} and $G$ is a reductive p-adic group, then~\cite[Cor.~1.8]{Bartels-Lueck(2023K-theory_red_p-adic_groups)} expresses the K-theory of $\calh(G;R)$ in terms of the K-theories of $\calh(U;R)$ where $U$ varies over the compact open subgroups of $G$.
  On the level of $\Kgroup_0$ this yields an isomorphism~\cite[Cor.~1.2]{Bartels-Lueck(2023K-theory_red_p-adic_groups)}
   \begin{equation} \label{eq:K0-assembly}
  \colimunder_{U \in \Sub_\COP(G)} \Kgroup_0 (\calh(U;R)) \xrightarrow{\cong} \Kgroup_0 (\calh(G;R)).
 \end{equation}
 These results are consequences of the $\CVCYC$-Farrell--Jones conjecture for $G$, see~\cite[Thm.~5.15]{Bartels-Lueck(2023K-theory_red_p-adic_groups)}.
 The proof relies on the action of $G$ on its extended Bruhat-Tits building; this is a ${\rm CAT}(0)$-space.
 More precisely, this action is used in~\cite[Thm.~1.2]{Bartels-Lueck(2023almost)} to construct certain almost equivariant maps and the arguments in~\cite[Thm.~5.15]{Bartels-Lueck(2023K-theory_red_p-adic_groups)} then work for all td-groups admitting such almost equivariant maps.

 In the case of discrete groups $\Gamma$ the Farrell--Jones Conjecture holds for all ${\rm CAT}(0)$-groups, i.e.\ groups $\Gamma$ admitting a cocompact isometric proper action on a locally compact\footnote{In~\cite{Bartels-Lueck(2012annals)} the assumptions are stated slightly differently. There are different possible definitions for proper actions, either topological ("$\Gamma \times X \to X \times X$, $(\gamma,x) \mapsto (\gamma x,x)$ is proper") or metrically ("for bounded set $B$ there are only finitely many $\gamma$ with $\gamma B \cap B \neq \emptyset$"). Adding locally compact here ensures that it does not matter which of the two definitions are used.}  finite dimensional\footnote{By dimension we will usually mean covering dimension.}  ${\rm CAT}(0)$-space~\cite{Bartels-Lueck(2012annals)}.
 Similarly, one might ask whether  the $\CVCYC$-Farrell--Jones conjecture~\cite[Conj.~5.12]{Bartels-Lueck(2023K-theory_red_p-adic_groups)} holds for td-groups admitting a cocompact isometric proper smooth\footnote{Smooth means that the isotropy subgroups of the action are open.} action on a locally compact finite dimen\-sional ${\rm CAT}(0)$-space. 
 The arguments from~\cite{Bartels-Lueck(2023K-theory_red_p-adic_groups), Bartels-Lueck(2023almost)} do not prove this, because~\cite{Bartels-Lueck(2023almost)} relies on an additional property~\cite[Assum.~2.7]{Bartels-Lueck(2023almost)} of the action of $G$ on the ${\rm CAT}(0)$-space.
 This property is verified in~\cite{Bartels-Lueck(2023almost)} for the extended Bruhat-Tits building using that it is a Euclidean building.
 The purpose of this note is to observe that this property also holds for other buildings.
 This yields the following main result as a direct consequence of Propositions~\ref{prop:FJ-axiomatic} and~\ref{prop:group-theoretic}; see Section~\ref{sec:buildings-and-groups} for a discussion of $BN$-pairs.
 
 \begin{theorem} \label{thm:FJ-for-BN}
    If a td-group  $G$ admits a BN-pair $(B,N)$ in which $B$ is compact and open, then $G$ satisfies the $\CVCYC$-Farrell--Jones conjecture. 
  \end{theorem}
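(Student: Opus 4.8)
The plan is to reduce the statement to a geometric action on a building and then feed it into the axiomatic machinery, so that the theorem becomes the combination of the two cited propositions. I expect Proposition~\ref{prop:FJ-axiomatic} to be the axiomatic input: a td-group admitting a cocompact, isometric, proper, and smooth action on a locally compact finite-dimensional ${\rm CAT}(0)$-space that satisfies~\cite[Assum.~2.7]{Bartels-Lueck(2023almost)} already satisfies the $\CVCYC$-Farrell--Jones conjecture. This should follow by observing that the almost-equivariant maps of~\cite[Thm.~1.2]{Bartels-Lueck(2023almost)} and the reduction in~\cite[Thm.~5.15]{Bartels-Lueck(2023K-theory_red_p-adic_groups)} only use the existence of such an action. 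Proposition~\ref{prop:group-theoretic} should then be the group-theoretic input, producing an action of this kind from a BN-pair with $B$ compact and open; granting both, the theorem is immediate, since the action supplied by the latter satisfies the hypotheses of the former.

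For the group-theoretic input I would form the building $X$ of $(B,N)$, whose chambers are the cosets $gB$ and which carries a type function over the Coxeter system $(W,S)$ with $W=N/(B\cap N)$. Its Davis--Moussong realization is a complete ${\rm CAT}(0)$-space on which $G$ acts by type-preserving isometries, and it is finite-dimensional because $S$ is finite. The stabilizer of a cell of spherical type $J$ is the standard parabolic $P_J=\bigcup_{w\in W_J}B\dot wB$; since $W_J$ is finite and each double coset $B\dot wB$ is the continuous image of the compact set $B\times B$, the subgroup $P_J$ is a finite union of compact sets, hence compact, and it is open as it contains $B$. Thus every panel is finite, so $X$ is locally compact, and every cell stabilizer is compact open, giving properness and smoothness. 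Cocompactness holds because $G$ is chamber-transitive, so the fundamental chamber is a compact fundamental domain.

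The main obstacle is the verification of~\cite[Assum.~2.7]{Bartels-Lueck(2023almost)} for $X$. In~\cite{Bartels-Lueck(2023almost)} this was checked for the extended Bruhat--Tits building using that it is \emph{Euclidean}, i.e.\ that its apartments are flat; for a general BN-pair the apartments are arbitrary Coxeter complexes, so the flat-geometry argument is unavailable and a replacement is the genuinely new point. Here I would rely only on structure common to all buildings: the apartment system, the retractions $\rho$ onto an apartment based at a chamber, and the fact that such retractions are distance non-increasing and compatible with geodesics. The aim is to recast the relevant estimate of~\cite{Bartels-Lueck(2023almost)} so that it sees only apartments and retractions rather than Euclidean flatness, thereby establishing~\cite[Assum.~2.7]{Bartels-Lueck(2023almost)} in this generality, which is the remaining ingredient of Proposition~\ref{prop:group-theoretic}.
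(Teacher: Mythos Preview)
Your top-level reduction is exactly the paper's: Theorem~\ref{thm:FJ-for-BN} is simply Proposition~\ref{prop:group-theoretic} fed into Proposition~\ref{prop:FJ-axiomatic}. Your second paragraph (Davis--Moussong realization, compactness of parabolics from $B$ compact, local finiteness, properness, smoothness, cocompactness) matches Lemma~\ref{lem:compact_B} and Proposition~\ref{prop:geometric} closely.

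Where you diverge is in the key technical step, and here your plan is both different from the paper and too vague to be a proof. You describe Proposition~\ref{prop:FJ-axiomatic} as ``action satisfying \cite[Assum.~2.7]{Bartels-Lueck(2023almost)} implies $\CVCYC$-FJ'' and then propose to verify Assumption~2.7 for the building by reworking the Euclidean estimates of \cite{Bartels-Lueck(2023almost)} using chamber-based retractions $\rho$. The paper does neither. Proposition~\ref{prop:FJ-axiomatic} is stated not in terms of Assumption~2.7 but in terms of three abstract conditions~\ref{nl:transitive}, \ref{nl:discret}, \ref{nl:two-points} on a family $\cala$ of closed convex subspaces; these are essentially the building axioms (B0)--(B2) together with discreteness of the Coxeter group, and for a building they are verified in a few lines (Section~2). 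The work is then to show, for any ${\rm CAT}(0)$-space with such an $\cala$, that Assumption~2.7 holds with $\FS_0=\FS(A)$ (Lemmas~\ref{lem:fund-domain}--\ref{lem:V}). That argument uses no retractions and no building combinatorics: it uses properness of $G\curvearrowright\FS$, discreteness of $Gc_0\cap\FS(A)$ coming from~\ref{nl:discret}, a compactness/subsequence argument, and the Flat Strip Theorem (valid in any ${\rm CAT}(0)$-space) to transport control from $c$ to $c_0$.

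So the paper's replacement for Euclidean flatness is not ``retractions'' but the observation that only~\ref{nl:transitive}--\ref{nl:two-points} and general ${\rm CAT}(0)$ geometry are needed. Your retraction idea might be developable, but as stated it does not explain how a distance-nonincreasing retraction would give the inclusion $V_c\subseteq V_{c_0}$ of~\ref{V} for periodic $c$ near $c_0$; that is the genuine content, and your sketch does not address it.
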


  Theorem~\ref{thm:FJ-for-BN} is a direct consequence of
    Propositions~\ref{prop:FJ-axiomatic}  and~\ref{prop:group-theoretic}.
 In fact, the result can be slightly strengthened, see Addendum~\ref{add:group-theoretic}.
 
 \begin{corollary}
 	The K-theoretic statements from \cite{Bartels-Lueck(2023K-theory_red_p-adic_groups)}\footnote{These are~\cite[Cor.~1.2, Cor.~1.8, Cor.~1.18]{Bartels-Lueck(2023K-theory_red_p-adic_groups)}.} all generalize from reductive p-adic groups to td-groups  $G$ admitting a BN-pair $(B,N)$ in which $B$ is compact and open.
 	In particular, \eqref{eq:K0-assembly} is an isomorphism for such $G$.
 \end{corollary}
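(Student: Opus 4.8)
The plan is to deduce the corollary formally from Theorem~\ref{thm:FJ-for-BN}. The three K-theoretic statements \cite[Cor.~1.2, Cor.~1.8, Cor.~1.18]{Bartels-Lueck(2023K-theory_red_p-adic_groups)} are, as emphasized in the introduction, not proved directly but extracted from the $\CVCYC$-Farrell--Jones conjecture together with the standing hypotheses that $G$ is a td-group and that $R$ is a uniformly regular ring containing $\IQ$. Since Theorem~\ref{thm:FJ-for-BN} supplies exactly this conjecture for every td-group carrying a BN-pair $(B,N)$ with $B$ compact and open, the first task is to check that the passage from the conjecture to the three corollaries uses nothing about reductive p-adic groups beyond this input.

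First I would revisit the derivation of each corollary in \cite{Bartels-Lueck(2023K-theory_red_p-adic_groups)} and record which external facts it invokes. The expectation is that the only group-theoretic property used is that $G$ is a td-group, so that the family $\Sub_\COP(G)$ of compact open subgroups forms a neighborhood basis of the identity and the Hecke algebra $\calh(G;R)$ behaves as in the reductive case. Granting this, each corollary is obtained by the same formal manipulation of the assembly equivalence---replacing ``reductive p-adic group'' by ``td-group with compact open $B$ in a BN-pair'' throughout---so that no new argument is required.

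For the displayed statement \eqref{eq:K0-assembly} I would then specialize to $\Kgroup_0$. The isomorphism is the degree-zero instance of \cite[Cor.~1.8]{Bartels-Lueck(2023K-theory_red_p-adic_groups)}: the $\CVCYC$-Farrell--Jones conjecture identifies $\Kgroup_*(\calh(G;R))$ with a homology theory evaluated on a classifying space, uniform regularity of $R$ collapses the virtually cyclic family to the compact open subgroups by killing the relevant Nil-terms, and in degree zero this computation reduces to the colimit of $\Kgroup_0(\calh(U;R))$ over $U \in \Sub_\COP(G)$, giving precisely \eqref{eq:K0-assembly}.

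The main obstacle I anticipate is the verification in the first step: one must be sure that the reduction from $\CVCYC$ to compact open subgroups, and the identification of the relevant fixed-point spectra, are carried out in \cite{Bartels-Lueck(2023K-theory_red_p-adic_groups)} at the level of general td-groups satisfying the conjecture, rather than using any special feature of Bruhat--Tits buildings or of the reductive p-adic setting. If, contrary to expectation, some step secretly uses properness or cocompactness of a specific action, one would need to re-derive it from the BN-pair data; but since the conjecture itself is the only genuinely geometric input and it is already provided by Theorem~\ref{thm:FJ-for-BN}, I expect the remaining work to be bookkeeping.
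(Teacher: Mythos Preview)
Your approach is correct and matches the paper's own proof: both deduce the corollary by observing that the K-theoretic statements in \cite{Bartels-Lueck(2023K-theory_red_p-adic_groups)} are formal consequences of the $\CVCYC$-Farrell--Jones conjecture for a td-group $G$, which Theorem~\ref{thm:FJ-for-BN} supplies. The paper is terser---it simply cites the introduction of \cite{Bartels-Lueck(2023K-theory_red_p-adic_groups)} for the fact that these corollaries follow from the conjecture---whereas you spell out the verification that no reductive-specific input is used, but the substance is identical.
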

 
 \begin{proof}
 	As explained in the introduction to~\cite{Bartels-Lueck(2023K-theory_red_p-adic_groups)} these statements are consequences of the $\CVCYC$-Farrell--Jones conjecture for $G$.
 \end{proof}

 In Section~\ref{sec:buildings-and-groups} we recall examples of groups admitting such BN-pairs.
 These include completed Kac--Moody groups as explained Proposition~\ref{prop:kac-moody_new}.
 %\commentw{Just as an explanation for me personally: How important  are completed Kac-Moody groups in comparison with Kac-Moody groups? Are there prominent examples of(completed) Kac-Moody groups?}
 
 \subsection*{Acknowledgments} 
   We thank Dorian Chanfi and Bernhard Mühlherr for helpful diskussions.
   
    A.B.\ has been supported by the DFG (German Research Foundation) – Project-ID 427320536 – SFB 1442, as well as under Germany’s Excellence Strategy EXC 2044 390685587, Mathematics  M\"unster: Dynamics \-- Geometry \-- Structure; W.L.\ under Germany's Excellence
   Strategy \--- GZ 2047/1, Projekt-ID 390685813, Hausdorff Center for Mathematics at Bonn; S.W.\ through the DFG Heisenberg project WI 4079/6".

\section{The flow space}
  
  Let $X$ be a locally compact ${\rm CAT}(0)$-space with an isometric proper cocompact smooth action of the td group $G$.
  In the presence of a cocompact isometric  action local compactness implies completeness and so 
  by Hopf-Rinow~\cite[Prop.~I.3.7]{Bridson-Haefliger(1999)} $X$ is a proper metric space, i.e.\ closed balls are compact.
  A generalized geodesic in $X$ is a continuous map $c \colon \IR \to X$ for which there exits an open interval $U \subseteq \IR$ such that $c|_U$ is an isometric embedding and $c|_{\IR \setminus U}$ is locally constant.
  Here $U$ may be unbounded, in particular $U = \IR$ is allowed.   
  The flow space $\FS = \FS(X)$ associated to $X$ is the space of all generalized geodesics $c \colon \IR \to X$.
  A metric on the flow space is defined by
  \begin{equation*}
  	d_{\FS} (c,c') = \int_\IR d_X(c(t),c'(t))2 e^{-|t|} dt.
  \end{equation*}  
  The flow on $\FS$ is the $\IR$-action defined by reparametrization: $\Phi_\tau(c)(t) = c(\tau+t)$.
  For $c \in \FS$ we define $K_c$ as the subgroup of all $g \in G$ with $gc(t)=c(t)$ for all $t$, $V_c$ as the subgroup of all $g \in G$ for which there is $\tau_g$ such that $gc(t)=c(t+\tau_g)$ for all $t$.
  For $g \in V_c \setminus K_c$, $\tau_g$ is unique
  and we define $\tau_c := \inf \{ |\tau_g| \mid  g \in V_c \setminus K_c \}$.
  If $V_c = K_c$, then $\tau_c = \infty$.
  If $\tau_c < \infty$ then we say that $c$ is \emph{periodic}.  
  We have $K_c \subseteq V_c$. 
  With these definitions~\cite[Assum.~2.7]{Bartels-Lueck(2023almost)} requires that there is $\FS_0 \subseteq \FS$ such that
  \refstepcounter{theorem}
  \begin{enumerate}[label=(\thetheorem\alph*),
                 align=parleft, 
                 leftmargin=*,
                 %labelindent=1pt,
                 %labelwidth=60pt,
                 %labelsep=8pt,
                 %itemindent=-3pt
                 itemsep=1pt
                  ]
  \item\label{fund-domain} $G \cdot \FS_0 = \FS(X)$;
  \item\label{V} for $\ell > 0$ and $c_0 \in \FS_0$ there exists an open
    neighborhood $U$ of $c_0$ in $\FS_0$ such that for all $c \in U$ with
    $\tau_c \leq \ell$ we have $V_c \subseteq V_{c_0}$.
  \end{enumerate}
  
  Assume now that there is  a collection $\cala$ of closed convex subspaces of $X$ satisfying
  \refstepcounter{theorem}
  \begin{enumerate}[label=(\thetheorem\alph*),
                 align=parleft, 
                 leftmargin=*,
                 %labelindent=1pt,
                 %labelwidth=60pt,
                 %labelsep=8pt,
                 %itemindent=-3pt
                 itemsep=1pt
                  ]
  	\item \label{nl:transitive} for all $A, A' \in \cala$ there is $g \in G$ with $g(A) = A'$ and $g|_{A \cap A'} = \id_{A \cap A'}$;
  	%\item \label{nl:unique-ext} extensions of geodesics in all $A \in \cala$ are unique, i.e., if $I \subseteq \IR$ is closed and connected and $c,c' \colon I \to A$ are geodesics such that for some $a \neq b \in I$ we have $c(a) = c'(a)$, $c(b)=c'(b)$ then $c=c'$;  
  	\item \label{nl:discret} for every $A \in \cala$ the quotient $\Gamma_A = \{g \in G \mid gA = A\}/\{g \in G \mid g|_A = \id_A\}$ is discrete;
    \item \label{nl:two-points} for any two points $x,y \in X$ there is $A \in \cala$ with $x,y \in A$. 
  	%\item any geodesic\footnote{We allow finite geodesics, geodesics rays and bi-infinite geodesics.} is contained in some $A \in \cala$.  
  \end{enumerate}
  Later $X$ will be a building and $\cala$ an apartment system.
  For $A \in \cala$ we let $\FS(A) \subseteq \FS(X)$ be the subspace of all generalized geodesics in $A$.

\begin{lemma}
	\label{lem:fund-domain}
	Let $A \in \cala$. 
	Then $G \cdot \FS(A) = \FS(X)$, i.e.~\ref{fund-domain} holds with $\FS_0 := \FS(A)$. 
\end{lemma}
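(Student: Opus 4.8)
The plan is to reduce the statement to a purely geometric claim and then settle that claim by a compactness argument built on the three axioms.

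First I would exploit~\ref{nl:transitive} to see that $G$ acts transitively on $\cala$: given any $A' \in \cala$ there is $g \in G$ with $gA = A'$. Since an isometry carries generalized geodesics lying in $A$ to generalized geodesics lying in $gA$, this gives $g \cdot \FS(A) = \FS(gA)$, and letting $g$ range over $G$ yields $G \cdot \FS(A) = \bigcup_{A' \in \cala} \FS(A')$. Hence the lemma is equivalent to the assertion that \emph{every} generalized geodesic $c \colon \IR \to X$ has image contained in some member of $\cala$; note that in this reformulation only the image of $c$ matters, and that~\ref{nl:discret} is not needed.

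Next, fix such a $c$ and set $x_0 := c(0)$. For each $n \in \IN$ I would invoke~\ref{nl:two-points} to choose $A_n \in \cala$ containing the two points $c(-n)$ and $c(n)$. Because $A_n$ is convex it contains the geodesic segment $[c(-n),c(n)]$, and this segment contains $c([-n,n])$, since a generalized geodesic restricted to $[-n,n]$ traces out a subset of the segment joining its endpoints. In particular $x_0 \in A_n$ for all $n$, and for each fixed $s$ we have $c(s) \in A_n$ as soon as $n \ge |s|$. Thus I obtain a sequence of apartments, all through the common point $x_0$, capturing longer and longer pieces of $c$.

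The heart of the argument is to extract a limiting apartment. Fixing one apartment $A_0 \in \cala$ through $x_0$, I would apply~\ref{nl:transitive} to the pair $A_0, A_n$: as $x_0 \in A_0 \cap A_n$, the element $g_n$ with $g_n A_0 = A_n$ and $g_n|_{A_0 \cap A_n} = \id$ fixes $x_0$, so $g_n$ lies in the isotropy group $G_{x_0}$. Properness of the action makes $G_{x_0}$ compact, so (using that $G$, being a first-countable Hausdorff topological group, is metrizable) I may pass to a subsequence $g_{n_k} \to g_\infty \in G_{x_0}$ and put $A_\infty := g_\infty A_0 \in \cala$. For each $s$, taking $n_k \ge |s|$ gives $g_{n_k}^{-1} c(s) \in A_0$; continuity of the action yields $g_{n_k}^{-1} c(s) \to g_\infty^{-1} c(s)$, and since $A_0$ is closed we get $g_\infty^{-1} c(s) \in A_0$, i.e.\ $c(s) \in A_\infty$. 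As $s$ was arbitrary, $c(\IR) \subseteq A_\infty$, which proves the claim. The main obstacle is precisely this passage to the limit: a bounded generalized geodesic is handled outright by the convexity of a single apartment through its two endpoints, but for rays and bi-infinite lines one must manufacture an apartment containing an unbounded set while~\ref{nl:two-points} only ever supplies apartments through finite pairs of points. The device that resolves this is the observation that every apartment through $x_0$ has the form $g A_0$ with $g \in G_{x_0}$, which converts the compactness of the point stabilizer coming from properness into the compactness needed to form $A_\infty$.
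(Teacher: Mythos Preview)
Your argument is correct and follows essentially the same route as the paper: use~\ref{nl:two-points} and convexity to trap $c([-n,n])$ in apartments $A_n$, use~\ref{nl:transitive} to write $A_n = g_n A_0$ with $g_n$ in the compact stabilizer of $c(0)$, and pass to a limit. The only cosmetic difference is in the final step: the paper invokes smoothness to get $g_{k_n} c(t) = g c(t)$ exactly on $[-n,n]$, whereas you use continuity of the action together with closedness of $A_0$ to push $g_{n_k}^{-1} c(s)$ into $A_0$ in the limit; both variants work.
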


\begin{proof}   
   We argue as in the case of a Euclidean building, see~\cite[Lemma~A.1]{Bartels-Lueck(2023almost)}:
   Let $c \in \FS(X)$ be given.
   By~\ref{nl:two-points} $c(0) \in A'$ for some $A' \in \cala$. 
   By~\ref{nl:transitive} the action of $G$ on $\cala$ is transitive.
   Thus we can assume $c(0) \in A$.
   By~\ref{nl:two-points} we find $A_n \in \cala$ that contains
   $c(\pm n)$.  
   By convexity $A_n$ then contains $c([-n,n])$.  
   Using~\ref{nl:transitive} we find $g_n \in G$ such that $g_n A = A_n$ and $g_n$ fixes $A \cap A_n$.  
   Then $(g_n)_{n \in \IN}$ is a sequence in the compact subgroup of $G$ that fixes $c(0)$ and has therefore an accumulation point $g$.  
   As the action of $G$ on $X$ is continuous and smooth, 
   there is for each $n$ a $k_n$ such that $g_{k_n} c(t) = g c(t)$ for all $t \in [-n,n]$. 
  It follows that $gA$ contains the image of $c$ and so $c \in g\cdot\FS(A)$.
\end{proof}

\begin{lemma}\label{lem:discrete-orbits-in-FS(A)} Let $c_0 \in \FS(A)$.  Then
  $Gc_0 \cap \FS(A)$ is discrete.
\end{lemma}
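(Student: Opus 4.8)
The plan is to identify $Gc_0 \cap \FS(A)$ with an orbit of the discrete group $\Gamma_A$ from~\ref{nl:discret} and then invoke proper discontinuity. Write $S := \{g \in G \mid gA = A\}$ for the setwise stabilizer and $P := \{g \in G \mid g|_A = \id\}$ for the pointwise stabilizer, so that $\Gamma_A = S/P$. Since $c_0 \in \FS(A)$ has image in $A$ and $P$ fixes $A$ pointwise, $P$ fixes $c_0$; hence $S \cdot c_0$ is a single orbit of $\Gamma_A$, and clearly $S \cdot c_0 \subseteq Gc_0 \cap \FS(A)$ because $S$ preserves $A$ and hence $\FS(A)$. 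The first main step is to prove the reverse inclusion $Gc_0 \cap \FS(A) \subseteq S \cdot c_0$, i.e.\ that every $G$-translate of $c_0$ which happens to land in $\FS(A)$ is already an $S$-translate.

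For this reduction I would argue as follows. Suppose $c := gc_0 \in \FS(A)$. The image of $c_0$ lies in $A$, so the image of $c = gc_0$ lies in $gA$; since also $c \in \FS(A)$, its image lies in $A \cap gA$. Applying~\ref{nl:transitive} to the two apartments $gA$ and $A$ (both members of $\cala$, as the apartment system is $G$-invariant) yields $f \in G$ with $f(gA) = A$ and $f|_{gA \cap A} = \id$. Setting $h := fg$ then gives $hA = f(gA) = A$, so $h \in S$, and $hc_0 = fc = c$, because $f$ fixes the image of $c$ pointwise. Thus $gc_0 = hc_0 \in S \cdot c_0$, which establishes $Gc_0 \cap \FS(A) = S \cdot c_0 = \Gamma_A \cdot c_0$.

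It then remains to show that $\Gamma_A \cdot c_0$ is discrete in $\FS(A)$, hence in $\FS(X)$ since $\FS(A)$ carries the subspace topology. I would first check that $\Gamma_A$ acts properly on $A$: the subgroup $S$ is closed and $A$ a closed $S$-invariant subspace, so the proper $G$-action on $X$ restricts to a proper $S$-action on $A$; its kernel $P$ is compact (closed and contained in the compact isotropy group of any point of $A$), so the induced $\Gamma_A = S/P$ action on $A$ is again proper. As $\Gamma_A$ is discrete by~\ref{nl:discret}, the action is properly discontinuous, so $\{\gamma \in \Gamma_A \mid \gamma K \cap K \neq \emptyset\}$ is finite for every compact $K \subseteq A$. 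Finally, using that the evaluation $\FS(A) \to A$, $c \mapsto c(0)$, is continuous (convergence in the flow-space metric forces pointwise convergence, since generalized geodesics are $1$-Lipschitz), any convergent sequence $\gamma_n c_0 \to c^*$ has $\gamma_n c_0(0)$ eventually inside a fixed compact ball; proper discontinuity then leaves only finitely many possibilities for $\gamma_n c_0$, ruling out accumulation.

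The step I expect to be the main obstacle is the reduction $Gc_0 \cap \FS(A) \subseteq S \cdot c_0$, since this is exactly where a global $G$-translate must be converted into a stabilizer element. It relies essentially on the ``fixes-the-intersection'' clause of~\ref{nl:transitive} together with the $G$-invariance of $\cala$ (so that $gA \in \cala$); \ref{nl:two-points} is not needed here, only the transitivity axiom in this refined form. By contrast, the properness bookkeeping—restricting a proper action to a closed subgroup and dividing by a compact kernel—is routine.
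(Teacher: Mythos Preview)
Your argument is correct and follows essentially the same route as the paper: first reduce $Gc_0 \cap \FS(A)$ to a $\Gamma_A$-orbit via~\ref{nl:transitive}, then deduce discreteness from properness together with discreteness of $\Gamma_A$. The only difference is cosmetic: the paper cites an external lemma to get that $G \curvearrowright \FS(X)$ (hence $G_A \curvearrowright \FS(A)$) is proper and reads off discrete orbits directly, whereas you establish properness of $\Gamma_A \curvearrowright A$ and transfer it to $\FS(A)$ via the evaluation map $c \mapsto c(0)$; your variant is a bit more self-contained.
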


\begin{proof}
  Let $g \in G$ with $gc_0 \in \FS(A)$. 
  In particular, $c_0(\IR) \subseteq A \cap gA$.
  By~\ref{nl:transitive} there is $h \in G$ with $hgA=A$ and $h|_{A \cap gA} = \id_{A \cap gA}$. 
  Then $hgc_0=gc_0$ and $hg \in G_A$.
  Thus, by~\ref{nl:discret}, $gc_0 \in G_A c_0 = \Gamma_A c_0$. 
  Consequently, $Gc_0 \cap \FS(A) = \Gamma_A c_0$.
  
  By assumption proper $G \curvearrowright X$ is proper.
  By~\cite[Lemma~2.3]{Bartels-Lueck(2023almost)} this implies that $G \curvearrowright \FS(X)$ is proper as well.
  It follows that $G_A \curvearrowright \FS(A)$ is proper.
  Thus $G_A \curvearrowright \FS(A)$ is proper and has in particular discrete orbits.
  Therefore $Gc_0 \cap \FS(A) = \Gamma_A c_0$ is discrete. 
\end{proof}

\begin{lemma}\label{lem:about-tau_c} Let $c \in \FS$ be periodic.  Then
  there is $v \in V_c$ with $vc=\Phi_{\tau_c}(c)$ and any
  such $v$ together with $K_c$ generates $V_c$.
\end{lemma}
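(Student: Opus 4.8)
The plan is to extract from the definition of $V_c$ a homomorphism $\phi\colon V_c\to(\IR,+)$ recording the translation length $\tau_g$, and to reduce the whole statement to the assertion that its image is an infinite cyclic (i.e.\ nontrivial discrete) subgroup of $\IR$. First I would exploit equivariance: for $g\in G$ one has $V_{gc}=gV_cg^{-1}$, $K_{gc}=gK_cg^{-1}$, $\tau_{gc}=\tau_c$, and $g\Phi_\tau=\Phi_\tau g$, so the statement transforms correctly under the $G$-action. Hence by Lemma~\ref{lem:fund-domain} I may assume $c\in\FS(A)$ for our fixed apartment $A\in\cala$. Next I would observe that a periodic $c$ is automatically a genuine geodesic line: if $v\in V_c\setminus K_c$ then $vc=\Phi_{\tau_v}(c)$, but $vc$ has the same ``support interval'' $U$ (the maximal interval on which $c$ is a nonconstant isometric embedding) as $c$, while $\Phi_{\tau_v}(c)$ has support $U-\tau_v$; since $\tau_v\neq 0$ this forces $U=\IR$. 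In particular $c$ is injective, so $\Phi_\tau(c)=\Phi_{\tau'}(c)$ forces $\tau=\tau'$, and since the flow preserves the image $c(\IR)\subseteq A$ we have $\Phi_\tau(c)\in\FS(A)$ for all $\tau$.

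With $c$ a line, for every $g\in V_c$ the number $\tau_g$ with $gc(t)=c(t+\tau_g)$ is unique, and $g\mapsto\tau_g$ is a homomorphism $\phi\colon V_c\to\IR$, since $gh\,c(t)=g\,c(t+\tau_h)=c(t+\tau_g+\tau_h)$. Its kernel is exactly $K_c$, and $\phi(V_c)\neq 0$ because $c$ is periodic.

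The key step is to show that $\phi(V_c)$ is discrete, and I expect this to be the main obstacle, since it is the only point where the geometric hypotheses genuinely enter. Here I would use that $\{vc\mid v\in V_c\}=\{\Phi_\tau(c)\mid\tau\in\phi(V_c)\}$ is contained in $Gc\cap\FS(A)$, which is discrete by Lemma~\ref{lem:discrete-orbits-in-FS(A)}; this is precisely why the reduction to $\FS(A)$ in the first paragraph is needed. Since the flow $\tau\mapsto\Phi_\tau(c)$ is continuous and, by injectivity of $c$, injective, a sequence $\tau_n\in\phi(V_c)\setminus\{0\}$ with $\tau_n\to 0$ would give points $\Phi_{\tau_n}(c)\neq c$ accumulating at $c\in Gc\cap\FS(A)$, contradicting discreteness. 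Thus $0$ is isolated in $\phi(V_c)$, so $\phi(V_c)=\alpha\IZ$ for some $\alpha>0$.

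Finally, reading off the conclusion is routine. From $\phi(V_c)=\alpha\IZ$ one gets $\tau_c=\inf\{|\tau_g|\mid \tau_g\neq 0\}=\alpha$, and the positive generator yields $v\in V_c$ with $\phi(v)=\tau_c$, i.e.\ $vc=\Phi_{\tau_c}(c)$. For any such $v$ and any $g\in V_c$ we have $\phi(g)=n\tau_c=\phi(v^n)$ for some $n\in\IZ$, so $gv^{-n}\in\ker\phi=K_c$ and $g\in\langle v,K_c\rangle$; hence $V_c=\langle v,K_c\rangle$. Undoing the conjugation from the first paragraph gives the statement for the original $c$.
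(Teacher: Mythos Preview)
Your argument is correct. Note, however, that the paper does not actually prove this lemma: it simply quotes \cite[Lemma~2.8]{Bartels-Lueck(2023almost)}, which is formulated and proved in the general setting of a td-group acting properly on a ${\rm CAT}(0)$-space, with no apartment system in sight. Your route is genuinely different: you import the apartment structure via Lemma~\ref{lem:fund-domain} to place $c$ in $\FS(A)$, and then invoke Lemma~\ref{lem:discrete-orbits-in-FS(A)} to get discreteness of $\phi(V_c)$. This is perfectly valid in the context of the present paper (the hypotheses~\ref{nl:transitive}--\ref{nl:two-points} are standing assumptions in this section), and it has the virtue of making the argument self-contained here. What it costs you is generality: the statement holds for any proper isometric action of a td-group on a ${\rm CAT}(0)$-space, and the direct proof is in fact shorter. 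One observes that $\phi\colon V_c\to\IR$ is continuous and that any compact open subgroup $U\subseteq G$ satisfies $\phi(V_c\cap U)=\{0\}$ (a compact subgroup of $\IR$ is trivial), so $K_c$ is open in $V_c$ and $V_c/K_c$ is a discrete subgroup of $\IR$; no reduction to an apartment is needed.
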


\begin{proof}
	This is~\cite[Lemma~2.8]{Bartels-Lueck(2023almost)}.
\end{proof}

\begin{lemma} \label{lem:V}
	Let $A \in \cala$. 
	Then~\ref{V} holds with $\FS_0 := \FS(A)$. 
\end{lemma}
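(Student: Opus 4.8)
The plan is to argue by contradiction. Suppose \ref{V} fails for $\FS_0 = \FS(A)$. Then there are $\ell > 0$, a point $c_0 \in \FS(A)$, and a sequence $c_n \to c_0$ in $\FS(A)$ with $\tau_{c_n} \le \ell$ but $V_{c_n} \not\subseteq V_{c_0}$. Since each $c_n$ is periodic, Lemma~\ref{lem:about-tau_c} provides $v_n \in V_{c_n}$ with $v_n c_n = \Phi_{\tau_{c_n}}(c_n)$ and $V_{c_n} = \langle K_{c_n}, v_n\rangle$. Thus it suffices to show that, after passing to a subsequence, both $v_n \in V_{c_0}$ and $K_{c_n} \subseteq V_{c_0}$ for all large $n$, which contradicts $V_{c_n} \not\subseteq V_{c_0}$.

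For the translating generator $v_n$ I would proceed as follows. Since $\tau_{c_n} \le \ell$, pass to a subsequence with $\tau_{c_n} \to \tau_\infty \in [0,\ell]$. By continuity of the flow, $v_n c_n = \Phi_{\tau_{c_n}}(c_n) \to \Phi_{\tau_\infty}(c_0)$, while $c_n \to c_0$. As $G \curvearrowright X$ is proper, so is $G \curvearrowright \FS(X)$ by \cite[Lemma~2.3]{Bartels-Lueck(2023almost)}; hence $\{v_n\}$ is relatively compact in $G$ and we may assume $v_n \to v$. Passing to the limit in $v_n c_n = \Phi_{\tau_{c_n}}(c_n)$ gives $v c_0 = \Phi_{\tau_\infty}(c_0)$, so $v \in V_{c_0}$. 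The fixator $K_{c_0}$ is open (by smoothness, being the pointwise fixator of the geodesic $c_0$ it is an intersection of open point-stabilizers) and contains the unit, so from $v_n v^{-1} \to e$ in the td-group $G$ we obtain $v_n v^{-1} \in K_{c_0}$ for large $n$; therefore $v_n \in K_{c_0} v \subseteq V_{c_0}$.

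For the pointwise fixator $K_{c_n}$ I would first establish a semicontinuity of stabilizers: if $x_n \to x$ in $X$ then $G_{x_n} \subseteq G_x$ for all large $n$. This follows from the same pattern, since any $g_n \in G_{x_n}$ fix $x_n \to x$, so by properness a subsequence converges to some $g \in G_x$, and openness of $G_x$ forces $g_n \in G_x$ eventually. A periodic $c_n$ is a genuine bi-infinite geodesic, so $K_{c_n}$ is the intersection of the point-stabilizers at two of its points; choosing the parameters inside the isometric locus of $c_0$ (or arbitrarily if $c_0$ is constant) and using the pointwise convergence $c_n(t) \to c_0(t)$, the semicontinuity yields $K_{c_n} \subseteq K_{c_0} \subseteq V_{c_0}$ for large $n$. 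Combined with the previous paragraph this gives $V_{c_n} = \langle K_{c_n}, v_n\rangle \subseteq V_{c_0}$, the desired contradiction.

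The main obstacle I anticipate is the treatment of $K_{c_n}$ together with its degenerate cases: verifying that $K_{c_0}$ is open and correctly identified when $c_0$ is constant or merely a ray, and matching the two reference parameters for $c_n$ with those for $c_0$ so that the convergence $c_n \to c_0$ (which the integrated metric $d_{\FS}$ upgrades to uniform convergence on compacta for these $1$-Lipschitz curves) can feed into the stabilizer semicontinuity. The generator part, by contrast, is driven cleanly by properness of the flow action together with the totally disconnected topology of $G$.
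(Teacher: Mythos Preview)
There are two genuine gaps, both rooted in the same mistaken intuition that pointwise fixators of geodesics behave like point stabilizers.

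\textbf{The group $K_{c_0}$ need not be open.} You write that $K_{c_0}$ is ``an intersection of open point-stabilizers'' and therefore open, and then use this to force $v_n v^{-1}\in K_{c_0}$ from $v_n v^{-1}\to e$. But an \emph{infinite} intersection of open subgroups of a td-group is typically not open. Concretely, take $G=SL_2(\IQ_p)$ acting on its Bruhat--Tits tree and let $c_0$ be the standard apartment (a bi-infinite geodesic). Then $K_{c_0}$ is the pointwise fixator of the apartment, which is the maximal compact subgroup of the diagonal torus---compact but \emph{not} open in $G$. So the step ``$v_n v^{-1}\to e$ implies $v_n v^{-1}\in K_{c_0}$'' fails exactly in the case that matters, namely when $c_0$ is a line (which is forced whenever the periods $\tau_{c_n}$ stay bounded away from $0$). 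The paper replaces this openness argument by something different: one straightens each $g_n$ by an element $h_n$ fixing $A\cap g_nA$ so that $h_ng_nA=A$; then the $h_ng_nc_0$ lie in the $\Gamma_A$-orbit of $c_0$, which is discrete by Lemma~\ref{lem:discrete-orbits-in-FS(A)}, and hence is eventually constant. A further argument comparing displacements shows $t_n=t$, yielding $h_ng_n\in V_{c_0}$ with $h_n\in K_{c_n}$.

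\textbf{$K_{c_n}$ is not the intersection of two point-stabilizers.} You claim that since $c_n$ is a bi-infinite geodesic, $K_{c_n}$ equals $G_{c_n(s)}\cap G_{c_n(t)}$ for two chosen parameters. In a ${\rm CAT}(0)$-space an isometry fixing two points fixes the segment between them, but geodesic \emph{extensions} are not unique (again the tree: the Iwahori subgroup fixes an edge of the standard apartment but not the whole apartment). So your stabilizer semicontinuity for points does not transfer to lines, and the inclusion $K_{c_n}\subseteq K_{c_0}$ is not established. The paper handles this with the Flat Strip Theorem: for $c$ near $c_0$ with a common translation, $c$ and $c_0$ bound a flat strip, so $c_0(\IR)$ lies in the closed convex hull of $c(\IR)\cup\{c_0(0)\}$. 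Then any $h$ fixing $c$ pointwise and fixing the single point $c_0(0)$ (the latter obtained from smoothness after passing to a subsequence) must fix $c_0$ pointwise. Without this geometric input the inclusion $K_c\subseteq K_{c_0}$ does not follow from the hypotheses.
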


\begin{proof}
   Let $\ell > 0$ and $c_0 \in \FS(A)$ be given.
   We first claim that there is $\epsilon > 0$ with the following property:
   if $g \in G$, $t \in [-\ell,\ell]$, $t \neq 0$, $c \in \FS(A)$ with 
   $d_{\FS}(c,c_0) < \epsilon$ and $gc = \Phi_t(c)$  then there is $h \in G$ with $hc=c$ and $hgc = \Phi_t(c)$.
   
   We argue by contradiction.
   Then there are $t_n \in [-\ell,\ell]$, $t_n \neq 0$, $g_n \in G$, $c_n \in \FS(A)$ with $g_n c_n = \Phi_t c_n$ and $c_n \to c_0$, but there is no $h \in G$ with $hc_n = c_n$ and $hg_nc = \Phi_{t_n}(c)$ for any $n$.
   The topology of $\FS(X)$ induced by its metric is the topology of uniform convergence on compact sets.
   In particular $d_X(c_n(0),c(0)) < \epsilon_n$ with $\epsilon_n \to 0$.
   We have $d_X(c(0),g_nc(0)) \leq d_X(c(0),c_n(0)) + d_X(c_n(0),g_nc_n(0)) + d_X(g_nc_n(0),g_nc(0)) \leq 2 \epsilon_n + d_X(c_n(0),c_n(t_n)) \leq 2 \epsilon_n \ell$.
   Thus the $g_n(c(0))$ vary over a bounded and therefore compact set in $X$.
   Since the action of $G$ on $X$ is proper this implies that the $g_n$ form a relative compact set in $G$.
   After a subsequence we can assume that $g_n \to g$ for some $g \in G$.
   After a further subsequence we can assume $t_n \to t$.
   Now $g_nc_0 \to  gc_0$.  
   As $G \curvearrowright \FS$ is isometric we also
   have $g_nc_n \to gc_0$.  
   Thus $gc_0 = \Phi_t(c_0)$.  
   Using~\ref{nl:transitive} we choose $h_n \in G$ with $A = h_n g_n A$ and $h_n|_{A \cap g_n A} = \id_{A \cap g_n A}$.
   We have $c_n = \FS(A)$ and $c_n = \Phi_{-t_n}(g_nc_n) \in \FS(g_nA)$.
   In particular $h_n c_n = c_n$ and $h_ng_nc_n=h_n\Phi_{t_n}(c_n)=\Phi_{t_n}(h_nc_n)=\Phi_{t_n}(c_n)$.
   Then  $h_n$ fixes $c_n(0)$ and $c_n(0) \to c(0)$.
   Thus the $h_n$ also form a relatively compact set in $G$ and after a subsequence we can assume $h_n \to h$.
   Now $h_ng_nc_0 \to hgc_0$. 
   As $h_ng_nA=A$ we have $h_ng_nc_0 \in \FS$ for all $n$. 
   By Lemma~\ref{lem:discrete-orbits-in-FS(A)} the $h_ng_n c_0$ form a discrete set in $\FS(A)$.
   As the $h_ng_n c_0$ also converge they must be eventually constant.
   So after a subsequence we can assume $h_ng_n c_0 = hgc_0$ for all $n$.
   Also $h_ng_nc_n=\Phi_{t_n}(c_n)$ implies $hgc_0=\Phi_{t}(c_0)$.
   Thus $h_ng_nc_0=\Phi_{t}(c_0)$ for all $n$.
   To arrive at the contradiction we aim for we need to show $t=t_n$.
   As $c_n$ is periodic (because $t_n \neq 0$) it is a bi-infinite geodesic.
   In particular, $d_X(c_n(Nt_n),c_n(0)) = Nt_n$ for all $N$.
   Thus $Nt_n = d_X(c_n(Nt_n),c_n(0)) \leq 2\epsilon_n + d(c_0(Nt),c_0(0)) \leq 2\epsilon_n + Nt$ for all $N$.
   Therefore $t \geq t_n$, and  $c$ is periodic as well. 
   Now conversely, $Nt = d_X(c(Nt),c(0)) \leq 2\epsilon_n + d(c_n(Nt_n),c_n(0)) \leq 2\epsilon_n + Nt_n$, implying $t_n \geq t$.
   Altogether, $t=t_n$ and so $h_ng_nc_0=\Phi_{t_n}(c_0)$ is the desired contradiction.
   This proves our first claim. 

   We observe that in the situation of the first claim the Flat Strip Theorem~\cite[p.182]{Bridson-Haefliger(1999)} applies.
   Thus for $c \in \FS(A)$ with $\tau_c \leq \ell$ 
   and $d_{\FS}(c,c_0) < \epsilon$ there is an isometric embedding $\IR \times [0,D] \to X$
   whose restriction to $\IR \times \{ 0 \}$ and $\IR \times \{D\}$ are (after reparametrization) $c$ and $c_0$.
   This implies that
   $c_0(\IR)$ is contained in the closure of the convex hull of $c_0(0) \cup c(\IR)$.
   In particular, if $h \in G$ fixes $c$ (in $\FS(X)$) and $c_0(0)$ (in $X$), then it fixes also $c_0$ (in $\FS(X)$).    

   Our second claim is that there is $\epsilon > 0$ with the following property: if $h \in G$, $c \in \FS(A)$ with
   $\tau_c \leq \ell$, $d_{\FS}(c,c_0) < \epsilon$, and $hc=c$, then $hc_0=c_0$ as well.
   
   We argue again by contradiction.
   Then there are $c_n \in \FS$, and $h_n \in G$ with $\tau_{c_n} \leq \ell$, $h_nc_n=c_n$, $c_n \to c_0$ but $h_nc_0\neq c_0$. 
   As $\tau_{c_n} \leq \ell$ there are $t_n \in [-\ell,\ell]$, $t_n \neq 0$ and $g_n \in G$ with $g_nc_n = \Phi_{t_n}(c_n)$. 
   Passing to a subsequence we can assume that the first claim applies to all $c_n$.
   Arguing as in the first claim we can pass to a further subsequence and assume that $h_n \to h$ for some $h \in G$.
   We have $c_n(0) \to c(0)$ and therefore $h$ fixes $c_0(0)$.
   As the action of $G$ on $X$ is smooth the stabilizer of $c_0(0)$ is open in $G$.
   Therefore, after passing to a subsequence, $h_n c_0(0) = c_0(0)$.
   Now the above observation using the Flat Strip Theorem implies that $h_n$ fixes $c_0$, contradicting our assumption.   
   This proves the second claim.   
   
   Choose now $\epsilon > 0$ such that the conclusion of both the first and the second claim hold.
   Let $c \in \FS(A)$ with $\tau_c \leq \ell$ and $d_{\FS}(c,c_0) < \epsilon$.
   The second claim implies directly that $K_c \subseteq K_{c_0} \subseteq V_{c_0}$.
   By Lemma~\ref{lem:about-tau_c} there is $v \in V_{c}$ with $vc=\Phi_{\tau_c}(c)$ such that $v$ together with $K_c$ generates $V_c$.
   So it remains to check $v \in V_{c_0}$.
   The first claim gives us $h \in K_c$ with $hv \in V_{c_0}$.
   As $K_c \subseteq V_{c_0}$ this implies $v \in V_{c_0}$.
\end{proof}

In summary we have the following.

\begin{proposition} \label{prop:FJ-axiomatic}
	Let $G$ be a td-group.
	Assume that $G$ admits an isometric proper cocompact smooth action  on a 
	locally compact ${\rm CAT}(0)$-space $X$ with a collection of closed convex subspaces $\cala$ satisfying~\ref{nl:transitive}, \ref{nl:discret}, and~\ref{nl:two-points}.
	
	Then $G$ satisfies the $\CVCYC$-Farrell--Jones conjecture. 	
\end{proposition}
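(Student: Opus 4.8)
The plan is to treat this proposition as the assembly step in which the preceding lemmas are combined and fed into the external machinery of \cite{Bartels-Lueck(2023almost)} and \cite{Bartels-Lueck(2023K-theory_red_p-adic_groups)}. The crucial observation, already recorded in the discussion preceding Lemma~\ref{lem:fund-domain}, is that \cite[Assum.~2.7]{Bartels-Lueck(2023almost)} amounts to the existence of a subset $\FS_0 \subseteq \FS(X)$ satisfying the two conditions~\ref{fund-domain} and~\ref{V}. Accordingly, the first step is to fix any $A \in \cala$ — the collection $\cala$ is nonempty by~\ref{nl:two-points} (applied to any point of $X$, which we may assume nonempty, as otherwise the statement is vacuous) — and to take $\FS_0 := \FS(A)$.

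With this choice, Lemma~\ref{lem:fund-domain} establishes~\ref{fund-domain} and Lemma~\ref{lem:V} establishes~\ref{V}, so the action of $G$ on $X$ satisfies \cite[Assum.~2.7]{Bartels-Lueck(2023almost)}. The remaining hypotheses required there — that the action be isometric, proper, cocompact and smooth on a locally compact (hence, by Hopf--Rinow as recalled at the start of this section, proper and complete) ${\rm CAT}(0)$-space — are all among our standing assumptions. Given this, I would invoke \cite[Thm.~1.2]{Bartels-Lueck(2023almost)} to produce the associated almost equivariant maps, and then conclude by the argument of \cite[Thm.~5.15]{Bartels-Lueck(2023K-theory_red_p-adic_groups)}, which, as explained in the introduction, applies to every td-group admitting such maps, that $G$ satisfies the $\CVCYC$-Farrell--Jones conjecture.

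The proof itself is therefore short: essentially all of the substance already sits in Lemma~\ref{lem:V}, whose verification of~\ref{V} (the openness of the inclusion $V_c \subseteq V_{c_0}$ in the flow-space topology, via the Flat Strip Theorem and the discreteness from Lemma~\ref{lem:discrete-orbits-in-FS(A)}) is the genuine obstacle and has already been carried out. The only point to watch in the assembly is that every hypothesis of the two cited theorems is matched by our hypotheses; in particular one should confirm that the finite-dimensionality input used in \cite{Bartels-Lueck(2023almost), Bartels-Lueck(2023K-theory_red_p-adic_groups)} is either among our assumptions or forced by local compactness together with cocompactness, and that substituting our $\FS(A)$ for the Euclidean-apartment $\FS_0$ of the original argument affects no other input. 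I expect this matching to be routine, since conditions~\ref{nl:transitive}--\ref{nl:two-points} were tailored precisely so that the building arguments of \cite{Bartels-Lueck(2023almost)} go through verbatim with an arbitrary apartment system in place of a Euclidean one.
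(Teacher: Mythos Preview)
Your proposal is correct and matches the paper's own proof essentially verbatim: fix $\FS_0 := \FS(A)$, invoke Lemmas~\ref{lem:fund-domain} and~\ref{lem:V} to verify \cite[Assum.~2.7]{Bartels-Lueck(2023almost)}, then apply \cite[Thm.~1.2]{Bartels-Lueck(2023almost)} and \cite[Thm.~5.15]{Bartels-Lueck(2023K-theory_red_p-adic_groups)}. Your additional remarks about nonemptiness of $\cala$ and about matching the finite-dimensionality hypothesis are careful but not addressed in the paper's proof, which simply treats the proposition as the assembly step you describe.
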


\begin{proof}
	By Lemma~\ref{lem:fund-domain} and~\ref{lem:V} the action $G \curvearrowright X$ satisfies~\cite[Assum.~2.7]{Bartels-Lueck(2023almost)}.
	Thus~\cite[Thm.~1.2]{Bartels-Lueck(2023almost)} applies in this situation. 
	This is all the input on $G$ needed for the proof of the $\CVCYC$-Farrell--Jones conjecture for $G$ in~\cite[Thm.~5.15]{Bartels-Lueck(2023K-theory_red_p-adic_groups)}.    
\end{proof}

\section{Strongly transitive actions on buildings}
Let $X$ be the Davis--Moussong realization \cite{Davis08}\cite[Chapter~12]{AbramenkoBrown08} of a building: it is a complete  ${\rm CAT}(0)$ space. 
It is also a finite dimensional affine cell complex and has therefore finite covering dimension. 
There is a compact \emph{model chamber} $D$ on which $\operatorname{Aut}(X)$ acts through a finite quotient and an equivariant topological quotient map $\tau \colon X \to D$. 
The image $\tau(x)$ is the \emph{type} of $x$. 
Images of splittings of $\tau$ are (closed) \emph{chambers}.
An automorphisms of $X$ is said to be type-preserving if its action on $D$ is trivial.
The type-preserving automorphisms form a finite index subgroup of $\operatorname{Aut}(X)$.
Let $\cala$ be an \emph{apartment system} for $X$, meaning that
\begin{enumerate}[label=(B\arabic*), start=0,align=parleft, 
                 leftmargin=*,
                 %labelindent=1pt,
                 %labelwidth=60pt,
                 %labelsep=8pt,
                 %itemindent=-3pt
                 itemsep=1pt
                  ]
  \item Every $A \in \cala$ is a subcomplex of $X$ that is isomorphic to the Davis--Moussong realization of a Coxeter complex.\label{item:bldg_cox}
  \item For $x,y \in X$ there is an $A \in \cala$ such that $x,y \in A$.\label{item:bldg_ex}
  \item For $A,A' \in \cala$ there is an isomorphism $\iota \colon A \to A'$ that restricts to the identity on $A \cap A'$.\label{item:bldg_uniq}
\end{enumerate}
Let $G$ be a topological group with an action on $X$.
The type preserving subgroup $G_0$ of $G$ consists of all $g$ that act type-preservingly on $X$.
We will require that $G_0$ acts \emph{strongly transitively} on $X$.
This means that $G_0$
 acts transitively on pairs $(C,A)$ where $A \in \cala$ and $C$ is a chamber of $A$. 
Equivalently, $G_{0}$ acts transitively on $\cala$ and $G_{0,A} = \{g \in G_0 \mid gA = A\}$ acts transitively on chambers of $A$. 
Since chambers are compact, this means in particular that $G_0$ (and thus $G$) act cocompactly on $X$. 

%\ab{Vielleicht machen wir diese Annahme nur im Beweis.}

%Postponing properness of the action and of $X$ we first show the following.

\begin{lemma}
  Under these assumptions the $A$ are closed convex subspaces satisfying \ref{nl:transitive}, \ref{nl:discret}, and~\ref{nl:two-points}.
\end{lemma}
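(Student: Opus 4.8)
The plan is to treat the three conditions separately. Condition \ref{nl:two-points} needs no work: it is literally the apartment-system axiom \ref{item:bldg_ex}. That each $A$ is closed is immediate, since $A$ is a subcomplex of the cell complex $X$. That $A$ is convex is the standard fact that apartments in a CAT(0) building are convex; as $X$ is CAT(0) and each $A$ is, by \ref{item:bldg_cox}, an isometrically embedded Davis--Moussong realization of a Coxeter complex, I would simply import this from \cite{Davis08,AbramenkoBrown08} rather than reprove it. The real content is in \ref{nl:transitive} and \ref{nl:discret}, and the common tool is that the type-preserving automorphisms of a Coxeter complex act \emph{simply} transitively on its chambers; in particular a type-preserving isomorphism between two apartments is determined by the image of a single chamber.

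For \ref{nl:transitive}, let $A,A' \in \cala$. Axiom \ref{item:bldg_uniq} supplies a (type-preserving) isomorphism $\iota \colon A \to A'$ with $\iota|_{A\cap A'} = \id$. I would fix a chamber $C$ of $A$ and set $D := \iota(C)$, a chamber of $A'$. Strong transitivity of $G_0$ on pairs (chamber, apartment) then yields $g \in G_0$ with $gA = A'$ and $gC = D$. Now $g|_A$ and $\iota$ are both type-preserving isomorphisms $A \to A'$ sending $C$ to $D$, so by the simple-transitivity fact they coincide; hence $g|_{A \cap A'} = \iota|_{A \cap A'} = \id$. Together with $gA = A'$ this $g$ witnesses \ref{nl:transitive}.

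For \ref{nl:discret} I would show that the kernel $K := \{g \in G \mid g|_A = \id\}$ is \emph{open} in $G_A := \{g \in G \mid gA = A\}$, which gives discreteness of $\Gamma_A = G_A/K$. The combinatorial half: fix a chamber $C$ of $A$; if $g \in G_0 \cap G_A$ stabilizes $C$ setwise, then, being type-preserving, $g$ commutes with the type map $\tau$, which is injective on $C$, so $g$ fixes $C$ pointwise, and then $g|_A = \id$ by the same simple-transitivity fact. Thus $G_0 \cap \{g \mid gC = C\} \subseteq K$. The topological half: $G_0$ is open in $G$, being the kernel of the continuous homomorphism from $G$ to the finite quotient of $\Aut(X)$ acting on the model chamber $D$; and the stabilizer of the compact chamber $C$ is open because the action is smooth. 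Intersecting, $K$ contains an open subgroup of $G_A$, as required.

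I expect the genuinely non-formal points to be two. The first is the convexity of apartments in \ref{nl:discret}'s sibling claim: this is a real theorem about Davis--Moussong realizations that I would cite rather than establish from scratch. The second, and the step deserving the most care, is the \emph{topological} half of \ref{nl:discret}: turning the combinatorial statement ``$g$ stabilizes the chamber $C$'' into an open condition on $g \in G$. This is precisely where smoothness of the action (openness of point stabilizers, applied to the finitely many cells/vertices determining $C$) is used, and it is the place where one must be careful not to conflate the combinatorial automorphism group of $A$ with the quotient topology that $\Gamma_A$ inherits from $G$.
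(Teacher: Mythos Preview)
Your structure and your treatment of~\ref{nl:two-points} and~\ref{nl:transitive} match the paper's closely; your argument for~\ref{nl:transitive} is in fact a bit more direct (one application of strong transitivity on pairs rather than composing two elements as the paper does). For closedness and convexity the paper gives the one-line argument directly (an isometrically embedded geodesic subspace of a uniquely geodesic space is convex) rather than citing.

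The gap is in your proof of~\ref{nl:discret}. You rely on smoothness of the $G$-action to conclude that the chamber stabilizer is open in $G$, but at this point in the paper smoothness is \emph{not} among the hypotheses: the only standing assumptions are that $G$ is a topological group acting on $X$ with $G_0$ acting strongly transitively. Smoothness, local finiteness of $X$, and properness are established only in the subsequent Lemma~\ref{lem:compact_B} under the additional hypothesis that $B$ is compact open. So your topological half, as written, invokes a hypothesis that is simply not available here.

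The paper avoids smoothness by arguing on the apartment side instead of the $X$ side: the quotient $\Gamma_{0,A}=G_{0,A}/G_{0,(A)}$ injects (continuously) into the group of type-preserving automorphisms of $A$, which is the Coxeter group $W$. The crucial observation is that an apartment, being a thin building, is \emph{always} locally finite regardless of whether $X$ is; hence $W$ acts properly on the proper space $A$ and is discrete in its isometry group. This forces $\Gamma_{0,A}$ to be discrete, and finite index of $G_{0,A}$ in $G_A$ finishes. In short, discreteness of $\Gamma_A$ is extracted from the geometry of a single apartment, not from any assumed niceness (smoothness, local finiteness) of the $G$-action on all of $X$.
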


\begin{proof}
   Observe that every apartment $A \in \cala$ is a geodesic metric space that is isometrically embedded in $X$ as a closed subspace by construction. 
   Since $X$, being ${\rm CAT}(0)$, is uniquely geodesic, it follows that $A$ is convex.

  Let $A,A' \in \cala$ and let $C$ be a chamber of $A$. By strong transitivity there exists a $g_1 \in G_0$ such that $g_1A = A'$. By \ref{item:bldg_uniq} there is an isomorphism $\iota \colon A' \to A$ restricting to the identity on $A \cap A'$. Let $C' = \iota(g_1C)$. Again by strong transitivity there is a $g_2 \in G_{0,A}$ with $g_2C' = C$, thus $\iota(g_1g_2C') = C'$. Now $\iota \circ (g_1g_2)|_A^{A'}$ is a type-preserving automorphism of $A$ that takes $C'$ to itself. It follows (see below) that it is the identity, i.e.\ $g_1g_2|_A^{A'} = \iota^{-1}$. Since $\iota$ restricts to the identity on $A \cap A'$, so does $g = g_1 g_2$.
  This shows \ref{nl:transitive}.
  
  The full group of type-preserving automorphisms of any apartment $A \in \cala$ is the underlying Coxeter group by construction. 
  In particular, an automorphism of $A$ that is type-preserving and fixes an interior point of a chamber, is trivial. 
  Write $G_{(A)} = \{g \in G \mid g|_A = \id_A\}$ for the pointwise stabilizer of $A$ and $G_{0,(A)}$ for its finite-index subgroup of type-preserving elements.
  Then $\Gamma_{0,A} = G_{0,A}/G_{0,(A)}$ continuously embeds into the Coxeter group and therefore is discrete.
  As $G_{0,A}$ has finite index in $G_A$, it follows that $G_{A}/G_{0,(A)}$ and hence also $\Gamma_A = G_{A}/G_{(A)}$ are discrete.  
  This shows \ref{nl:discret}.

  Finally, \ref{nl:two-points} is just \ref{item:bldg_ex}.
\end{proof}

We now discuss properness of the action.
On $X$ we consider the weak topology with respect to chambers. For ease of reference we record the following.

\begin{lemma}\label{lem:lf}
  The following notions are equivalent for $X$:
\begin{enumerate}[label=(\thetheorem\alph*),
                 align=parleft, 
                 leftmargin=*,
                 %labelindent=1pt,
                 %labelwidth=60pt,
                 %labelsep=8pt,
                 %itemindent=-3pt
                 itemsep=1pt
                  ]
  \item $X$ is locally finite in the the polyhedral sense: every face is contained in finitely many faces;\label{item:lf_poly}
  \item $X$ is locally finite in the building theoretic sense: every panel is contained in finitely many chambers;\label{item:lf_bldg}
  \item $X$ is locally compact;\label{item:lc}
  \item $X$ is proper as a metric space.\label{item:proper}
\end{enumerate}
\end{lemma}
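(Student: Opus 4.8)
The plan is to prove the cycle \ref{item:lf_poly} $\Rightarrow$ \ref{item:lc} $\Rightarrow$ \ref{item:lf_bldg} $\Rightarrow$ \ref{item:lf_poly}, and to include properness by proving \ref{item:lf_poly} $\Rightarrow$ \ref{item:proper} and \ref{item:proper} $\Rightarrow$ \ref{item:lf_bldg}; together these give the equivalence of all four conditions. Throughout I use that the index set $S$ is finite, so that there are only finitely many spherical subsets $T \subseteq S$ and hence only finitely many isometry types of cells: this finiteness of $S$ is forced by compactness of the model chamber $D$, which is the geometric realization of the poset of spherical subsets and would carry infinitely many vertices were $S$ infinite.

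The four topological implications are routine. For \ref{item:lf_poly} $\Rightarrow$ \ref{item:lc}, if $x$ lies in the interior of a cell $\sigma$ then every closed cell containing $x$ contains $\sigma$; by \ref{item:lf_poly} there are only finitely many such cells, and their union is a compact neighborhood of $x$ in the weak topology, so $X$ is locally compact. For \ref{item:lc} $\Rightarrow$ \ref{item:lf_bldg}, let $P$ be a panel with interior point $p$ and let $N$ be a compact neighborhood of $p$; a compact subset of a CW-complex meets the interiors of only finitely many cells, hence of only finitely many chambers, while $p \in \overline{\operatorname{int}C}$ for every chamber $C$ containing $P$, so only finitely many chambers contain $P$. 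For \ref{item:lf_poly} $\Rightarrow$ \ref{item:proper}, a finite-dimensional locally finite piecewise Euclidean complex with finitely many cell shapes has the property that every closed bounded set lies in a finite subcomplex and is therefore compact. For \ref{item:proper} $\Rightarrow$ \ref{item:lf_bldg}, were a panel $P$ contained in infinitely many chambers, picking in each such chamber a point at a small fixed distance $r$ from an interior point $p$ of $P$ would produce infinitely many points in the compact ball $\overline{B}(p,r)$ that are uniformly separated — any two lying in distinct chambers are bounded apart because the cells are isometric to one of finitely many models — which is impossible.

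The substance of the lemma is the implication \ref{item:lf_bldg} $\Rightarrow$ \ref{item:lf_poly}, and this is where I expect the real work to lie. The key point is to promote finite thickness at panels to finiteness of \emph{all} spherical residues. The sub-lemma is that a spherical residue $R$ of type $T$ is finite: as $R$ is a building of type $(W_T,T)$ with $W_T$ finite, every chamber of $R$ is joined to a fixed base chamber by a gallery of length at most $\ell(w_0)$, the length of the longest element of $W_T$, and by \ref{item:lf_bldg} each step of such a gallery offers only finitely many choices, so $R$ is finite. Granting this, local finiteness follows because the cells of the Davis--Moussong realization incident to a given cell are controlled by the chambers lying in the spherical residues through it — for instance the vertex glued from an $s$-residue $R$ carries one incident edge per chamber of $R$ — and these residues are now all finite while there are only finitely many spherical types. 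The main obstacle is precisely the bookkeeping in this last step: one must match the incidence relation of the piecewise Euclidean cells (indexed by chains of spherical subsets, glued along mirrors) with the residue structure of the building and verify that finiteness of all spherical residues is exactly what bounds the number of cells incident to each cell.
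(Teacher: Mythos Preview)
Your argument is correct, and the mathematical core---that finite thickness at panels forces all spherical residues to be finite---is exactly the insight the paper uses. The paper organizes the equivalence as a single cycle \ref{item:lf_poly} $\Rightarrow$ \ref{item:lf_bldg} $\Rightarrow$ \ref{item:lc} $\Rightarrow$ \ref{item:proper} $\Rightarrow$ \ref{item:lf_poly}: the implication \ref{item:lf_poly} $\Rightarrow$ \ref{item:lf_bldg} is immediate (panel and chamber midpoints are realized), the step \ref{item:lf_bldg} $\Rightarrow$ \ref{item:lc} absorbs your sub-lemma on finite spherical residues, and \ref{item:lc} $\Rightarrow$ \ref{item:proper} is just Hopf--Rinow since $X$ is complete and geodesic. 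This ordering saves you the direct argument for \ref{item:lf_poly} $\Rightarrow$ \ref{item:proper} and the redundant implications you carry; your route is not wrong, only slightly more laborious. In particular, the ``bookkeeping'' you flag in \ref{item:lf_bldg} $\Rightarrow$ \ref{item:lf_poly} is really the same bookkeeping the paper hides in \ref{item:lf_bldg} $\Rightarrow$ \ref{item:lc}, so neither approach avoids it.
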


\begin{proof}
  In the Davis--Moussong realization, midpoints of chambers and panels are always realized, so if $X$ is locally finite, each panel is contained in finitely many chambers, showing that \ref{item:lf_poly} implies \ref{item:lf_bldg}. In a locally finite building, subbuildings of spherical type are finite, so the Davis--Moussong realization is locally compact, so \ref{item:lf_bldg} implies \ref{item:lc}. Since $X$ is complete, geodesic \ref{item:lc} implies \ref{item:proper} by Hopf--Rinow~\cite[Prop.~I.3.7]{Bridson-Haefliger(1999)}. Clearly, a polyhedral complex that is proper with respect to the weak topology has to be locally finite so \ref{item:proper} implies \ref{item:lf_poly}.
\end{proof}

Our present assumptions do not guarantee properness of either $X$ or the action. The following lemma takes care of this.

\begin{lemma}\label{lem:compact_B}
  Assume that the stabilizer $B$ in $G$ of a chamber is open and compact. Then $G$ and $X$ are locally compact and the action of $G$ on $X$ is continuous, smooth and proper.
\end{lemma}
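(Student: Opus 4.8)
The plan is to establish the five assertions—local compactness of $G$, local compactness of $X$, and continuity, smoothness, and properness of the action—in an order that lets each feed into the next, with local finiteness of $X$ as the central intermediate goal that converts the topological hypotheses on $B$ into combinatorics. First I would dispose of the local compactness of $G$: since $B$ is compact and open it is a compact open neighborhood of the identity, and its left translates $gB$ give a compact open neighborhood of every $g \in G$; in particular $B$ is a compact open subgroup, so $G$ is a td-group. For $X$, by Lemma~\ref{lem:lf} it suffices to prove local finiteness, i.e.\ that every panel is contained in only finitely many chambers.

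For local finiteness I would use the BN-pair furnished by strong transitivity of $G_0$, with $B_0 := B \cap G_0$ the stabilizer of the fundamental chamber $C_0$. Fix a panel $P$ of type $s$ of $C_0$. The stabilizer of $P$ in $G_0$ is the rank-one standard parabolic $\langle B_0, s\rangle = B_0 \sqcup B_0 s B_0$, which acts transitively on the chambers containing $P$ with $C_0$-stabilizer exactly $B_0$ (a type-preserving element fixing $C_0$ setwise fixes it, and hence $P$, pointwise); see~\cite[Chapter~6]{AbramenkoBrown08}. Hence the number of chambers containing $P$ equals $[\langle B_0, s\rangle : B_0] = 1 + [B_0 : B_0 \cap sB_0 s^{-1}]$. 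Now $B_0$ is a compact open subgroup of $G_0$, and the conjugate $sB_0 s^{-1}$ is again open, so $B_0 \cap sB_0 s^{-1}$ is an open subgroup of the \emph{compact} group $B_0$ and therefore of finite index. Thus the residue of $P$ is finite; by strong transitivity every type-$s$ panel is $G_0$-equivalent to $P$, and as there are only finitely many types, $X$ is locally finite. Lemma~\ref{lem:lf} then yields that $X$ is locally compact and a proper metric space.

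Next I would treat smoothness and continuity together. Since $G_0$ is open, the type-preserving pointwise stabilizer $G_0 \cap G_C$ of any chamber $C$ is open, being the intersection of $G_0$ with the open group $G_C$ (a conjugate of $B$). For $x \in X$ the closed star $\operatorname{St}(x)$ is a finite union of chambers by local finiteness, so the intersection $N_x$ of the type-preserving pointwise stabilizers of these finitely many chambers is an open subgroup that fixes a neighborhood of $x$ pointwise; in particular $N_x \subseteq G_x$, so $G_x$ is open and the action is smooth. For continuity it suffices to check joint continuity of $(g,x) \mapsto gx$ at each point $(e, x_0)$, the general case following by composing with the homeomorphism $x \mapsto g_0 x$. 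Given an open $U \ni x_0$, choose an open neighborhood $W \subseteq U$ of $x_0$ contained in the closed star; then $N_{x_0} \cdot W = W \subseteq U$, which proves continuity.

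Finally, for properness I would first note that point stabilizers are compact: $G_x$ preserves the carrier $F$ of $x$, hence acts on the finite residue of $F$ with kernel contained in a single (compact) chamber stabilizer, and $G_x$ is a closed subgroup of this extension of a finite group by a compact group. A bounded set $Y$ meets only finitely many chambers $C_1, \dots, C_m$; any $g$ with $gY \cap Y \neq \emptyset$ maps some $C_i$ to some $C_j$, hence lies in one of the finitely many cosets $g_{ij} G_{C_i}$, a finite union of compact sets. Therefore $\{g \mid gY \cap Y \neq \emptyset\}$ is relatively compact and the action is proper. I expect the main obstacle to be local finiteness: the entire point is to convert the purely topological hypotheses (openness and compactness of $B$) into the combinatorial finiteness of panel residues, and the index computation $[B_0 : B_0 \cap sB_0 s^{-1}] < \infty$ is exactly where this conversion happens; the subsidiary subtlety is ensuring that $B_0$ is genuinely compact and open, i.e.\ that $G_0$ is open of finite index in $G$.
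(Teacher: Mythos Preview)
Your proposal is correct and follows essentially the same route as the paper: reduce to $G_0$, establish local finiteness of $X$ as the pivotal step (the paper phrases this as ``$B$ is compact and $G_0/B$ is discrete, hence $B$-orbits on chambers are finite'' rather than your equivalent index computation $[B_0 : B_0 \cap \dot s B_0 \dot s^{-1}] < \infty$), and then read off continuity, smoothness, and properness from local finiteness. Your closing worry about $G_0$ being open is harmless for the argument as written, since $B_0 = B \cap G_0$ is automatically compact and open in the subspace topology on $G_0$, which is all the index computation and the construction of $N_x$ require.
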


See \cite{Kramer22} for a detailed discussion of related questions.

\begin{proof}
  As $G_0$ has finite index in $G$ it suffices to prove the statement for $G_0$ in place of $G$.

  That $G_0$ is locally compact is immediate.   
  The set of chambers can be identified with the space $G_0/B$, which is discrete since $B$ is open. Denote the chamber $1\cdot B/B$ by $C$ and let $P$ be a panel. Strong transitivity implies that $B$ acts transitively on the other chambers containing $P$. Since $B$ is compact and $G_0/B$ is discrete, it follows that there are finitely many of these. Hence by Lemma~\ref{lem:lf} $X$ is locally finite. Since $X$ is locally finite, on $\Aut(X)$ the permutation topology on the set $G_0/B$ of chambers and the compact-open topology for the weak topology on $X$ coincide so the action is continuous. Since $X$ is locally finite, all point-stabilizers are commensurable and since $B$ is compact all point-stabilizers are compact. Since $G_0$ preserves the polyhedral structure on $X$, it follows that the action is proper. Since the action is type-preserving, every cell-stabilizer contains a chamber-stabilizer, which is a conjugate of $B$ by strong transitivity and therefore open; it follows that the action is smooth.
\end{proof}

In summary we have.

\begin{proposition}\label{prop:geometric}
  Let $X$ be the Davis--Moussong realization of a building and let $\cala$ be a system of apartments. 
  Let a topological group $G$ act on $X$.
  Assume that its type-preserving subgroup $G_0$ acts
  strongly transitively on $X$ with respect to $\cala$ and assume that the stabilizer of a chamber is compact and open. 
  Then $X$ is a locally compact finite dimensional  ${\rm CAT}(0)$-space, the action of $G$ on $X$ is isometric, continuous, smooth, cocompact and proper. 
  Moreover, all $A \in \cala$ are 
  closed convex and the conditions~\ref{nl:transitive}, \ref{nl:discret}, and~\ref{nl:two-points} are satisfied.
\end{proposition}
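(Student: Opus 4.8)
The plan is to assemble the proposition from the results already established in this section, as it is a summary statement whose every assertion has essentially been verified. First I would recall from the opening discussion of the section that the Davis--Moussong realization $X$ of a building is a complete ${\rm CAT}(0)$-space which is a finite-dimensional affine cell complex, and hence of finite covering dimension; this settles that $X$ is a finite-dimensional ${\rm CAT}(0)$-space. Local compactness of $X$ is then exactly one of the conclusions of Lemma~\ref{lem:compact_B}, whose hypothesis --- that the chamber stabilizer $B$ is compact and open --- is among the present assumptions.

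For the action of $G$ I would proceed as follows. The action is isometric because $G$ acts through $\Aut(X)$, and automorphisms of the building act by isometries of the ${\rm CAT}(0)$-metric on its Davis--Moussong realization. Continuity, smoothness, and properness of the action are precisely the remaining conclusions of Lemma~\ref{lem:compact_B}. For cocompactness I would use the remark made when strong transitivity was introduced: $G_0$ acts transitively on the chambers of $X$, so the $G$-translates of a single (compact) chamber already cover $X$.

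The assertions about the apartment system are the content of the unnamed lemma following the apartment-system axioms~\ref{item:bldg_cox}--\ref{item:bldg_uniq}: under strong transitivity and the compact-open chamber-stabilizer assumption, every $A \in \cala$ is a closed convex subspace satisfying~\ref{nl:transitive}, \ref{nl:discret}, and~\ref{nl:two-points}. Combining the three paragraphs yields the proposition.

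I do not anticipate a real obstacle, since the proposition merely collects earlier results. The only two claims not stated verbatim in an earlier lemma are that the action is isometric and that it is cocompact, and both reduce to standard facts recorded above. The one point requiring care is to confirm that the hypotheses of Lemma~\ref{lem:compact_B} and of the apartment lemma are all implied by the stated assumptions, so that their conclusions may simply be cited.
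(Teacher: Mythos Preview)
Your proposal is correct and matches the paper's approach exactly: the paper presents this proposition as a summary (``In summary we have'') without a separate proof, and your plan simply unwinds which earlier statements cover each clause. The only minor overstatement is that the unnamed apartment lemma uses only strong transitivity of $G_0$, not the compact-open hypothesis on $B$; otherwise your attributions are accurate.
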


\section{Buildings and groups} \label{sec:buildings-and-groups}

We now discuss what groups satisfy the conditions of Proposition~\ref{prop:geometric}. 
Assume that a group $G$ acts type-preservingly and strongly transitively 
on a building, let $B$ be the stabilizer of a chamber $C$ and $N$ be the setwise stabilizer of an apartment $\Sigma$ containing $B$ and assume that $G = \langle B,N \rangle$. 
Then $B \cap N$ is the pointwise stabilizer of $\Sigma$ and $W = N/(B \cap N)$ is the underlying Coxeter group; let $S$ be the fundamental reflections taking $C$ to its neighbors in $\Sigma$. 
Then $(G,B,N,S)$ is a \emph{Tits system} and $(B,N)$ a \emph{BN-pair} for $G$ \cite[Def.~B.24]{Marquis18}. 
Importantly, the converse is also true: if $(B,N)$ is a BN-pair for $G$, 
then $G$ acts strongly transitively on a building with chamber stabilizer $B$ and $N$ setwise stabilizing an apartment \cite[Prop.~B.26]{Marquis18}. 
Thus Proposition~\ref{prop:geometric} is equivalent to the following group-theoretic formulation:

\begin{proposition}\label{prop:group-theoretic}
  Let $G$ be a topological group that admits a BN-pair $(B,N)$ in which $B$ is compact and open. %Assume that $G = G_0 \cdot N_G(B)$.
  Then $G$ admits a continuous, isometric, smooth, proper, cocompact action on a locally compact finite dimensional ${\rm CAT}(0)$ space with a collection $\cala$ of closed convex subspace satisfying the conditions~\ref{nl:transitive}, \ref{nl:discret}, and~\ref{nl:two-points}.
\end{proposition}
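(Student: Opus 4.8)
The plan is to read off the hypotheses of Proposition~\ref{prop:geometric} directly from the given BN-pair and then quote that proposition; the substance of the argument lies entirely in the dictionary between BN-pairs and buildings recalled above. By \cite[Prop.~B.26]{Marquis18} the BN-pair $(B,N)$ gives rise to a building $\Delta$ whose chamber set is identified with $G/B$, on which $G$ acts strongly transitively, with $B$ the stabilizer of the fundamental chamber $C = 1 \cdot B$ and $N$ the setwise stabilizer of a fundamental apartment $\Sigma$ of type $(W,S)$, where $W = N/(B \cap N)$. I would take $X$ to be the Davis--Moussong realization of $\Delta$ and let $\cala = \{ g\Sigma \mid g \in G\}$ be the induced system of apartments, so that $X$ is a complete finite-dimensional ${\rm CAT}(0)$ cell complex \cite{Davis08}, \cite[Chapter~12]{AbramenkoBrown08} and $\cala$ is an apartment system in the sense of \ref{item:bldg_cox}--\ref{item:bldg_uniq}.

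Next I would verify the two remaining inputs of Proposition~\ref{prop:geometric}. The stabilizer of the chamber $C$ is $B$, which is compact and open by hypothesis. And the $G$-action on $\Delta$ is type-preserving: the building produced from a Tits system carries a $G$-invariant Weyl distance, so $G$ acts by type-preserving automorphisms and hence $G_0 = G$. Therefore the strong transitivity of $G$ on pairs (chamber, apartment) provided by \cite[Prop.~B.26]{Marquis18} is exactly the strong transitivity of $G_0$ required in Proposition~\ref{prop:geometric}. With $X$, $\cala$, the type-preserving strongly transitive action, and the compact-open chamber stabilizer all in place, Proposition~\ref{prop:geometric} applies and delivers the continuous, isometric, smooth, proper, cocompact action together with the collection $\cala$ of closed convex subspaces satisfying \ref{nl:transitive}, \ref{nl:discret}, and~\ref{nl:two-points}.

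I do not expect a serious obstacle, since all the analytic content --- local compactness of $G$ and $X$, and continuity, smoothness and properness of the action --- is deduced inside Proposition~\ref{prop:geometric} (through Lemma~\ref{lem:compact_B}) purely from the chamber stabilizer being compact and open, so no separate verification about the topology of $G$ is needed. The one point I would be careful to spell out is the type-preservation that identifies $G_0$ with $G$: the strong-transitivity hypothesis of Proposition~\ref{prop:geometric} is phrased for $G_0$, whereas \cite[Prop.~B.26]{Marquis18} produces a strongly transitive action of the full group $G$, so making $G_0 = G$ explicit is what licenses the application.
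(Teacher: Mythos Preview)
Your proposal is correct and follows essentially the same route as the paper: invoke the BN-pair/building dictionary (as in \cite[Prop.~B.26]{Marquis18}) to obtain a strongly transitive type-preserving action with chamber stabilizer $B$, and then apply Proposition~\ref{prop:geometric}. Your explicit remark that $G_0 = G$ because actions arising from BN-pairs are type-preserving is a helpful clarification the paper leaves implicit, but it does not change the strategy.
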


The following is minor generatization of Proposition~\ref{prop:group-theoretic}.

\begin{addendum} \label{add:group-theoretic}
  Let $G$ be a topological group with an open subgroup $G_0$ that admits a BN-pair $(B,N)$ in which $B$ is compact and open. Assume that $G = G_0 \cdot N_G(B)$.
  Then the conclusion of Proposition~\ref{prop:group-theoretic} remains true for $G$.
\end{addendum}

\begin{proof}
  That $G_0$ admits a BN-pair means that it acts strongly transitively on a thick building. Its parabolic subgroups are the subgroups of $G_0$ that contain a conjugate of $B$ and they are self-normalizing \cite[Theorem~6.43]{AbramenkoBrown08}. Since the building is locally finite by Lemmas~\ref{lem:lf} and~\ref{lem:compact_B} the Davis realization $X$ is the realization of the poset of subgroups of $G_0$ that are finite index overgroups of a conjugate of $B$. Thus the action of $G_0$ on $X$ can be identified with the conjugation action of $G_0$ on the realization of the poset just described. The condition $G = G_0 \cdot N_G(B)$ asserts that $G$ acts on this poset by conjugation.
\end{proof}

It remains to discuss which groups admit such a BN-pair. There are two main sources for such groups: Bruhat--Tits theory gives groups for which our results are known while Kac--Moody theory leads to new results.

The point of departure of both theories are reductive groups over fields. If $\bfG$ is a connected reductive group defined over a field $k$ there is an associated spherical \emph{Tits building} $\Delta_{\bfG,k}$ on which $\bfG(k)$ acts strongly transitively.

\subsection*{Bruhat--Tits}
If $k$ is a non-Archimedean local field with ring of integers $\calo$ then there is additionally an associated locally finite, euclidean \emph{Bruhat--Tits building} $X_{\bfG,k}$ on which $\bfG(k)$ acts and it contains an open subgroup $\bfG(k)^0$ that acts strongly transitively. The building $\Delta_{\bfG,k}$ is naturally identified with the visual boundary of $X_{\bfG,k}$, so one may regard $X_{\bfG,k}$ as a refinement of $\Delta_{\bfG,k}$. For every chamber $c$ of $X_{\bfG,k}$ there is an $\calo$-scheme structure $\mathfrak{G}$ on $\bfG$ such that $\mathfrak{G}(\calo)$ is (commensurable to) the stabilizer of $c$ in $\bfG(k)$ \cite[Section~3.4]{Tits79}, so in particular the stabilizer $B$ of $c$ is compact open.  Thus $\bfG(k)$ satisfies the hypotheses of Addendum~\ref{add:group-theoretic}. 
These examples are covered by~\cite{Bartels-Lueck(2023almost)}.

\subsection*{Kac--Moody}
In a different direction, Kac--Moody theory extends rather than refines $\Delta_{\bfG,k}$: If $\bfG$ is a Tits functor (of which a split semisimple group\footnote{Tits extends Chevalley--Demazure group schemes which explains the restriction to split semisimple groups. Recall that a reductive group is an almost-direct product of a semisimple group and a central torus, so the restriction to semisimple groups is harmless. The restriction to split groups is substantial, though note that quasi-split groups are considered in \cite{Remy02}} is a special case) and $k$ is a field, there is an associated \emph{twin building} $(\Delta_+,\Delta_-)$ on which $\bfG(k)$ acts strongly transitively. We discuss this following the standard references \cite{Remy02,AbramenkoBrown08,Marquis18}. An important feature that spherical buildings have is an \emph{opposition relation}: two chambers $C$ and $D$ may be opposite in some apartment $A$ which turns out to be unique. While in non-spherical apartments, chambers cannot be opposite, \emph{twin buildings} generalize spherical buildings to arbitrary types. They consist of two buildings $X_+$ and $X_-$ and an opposition relation between them. A \emph{twin apartment} consists of an apartment $A_+$ of $X_+$ and an apartment $A_-$ of $X_-$ such that opposition establishes a bijective correspondence between the chambers of $A_+$ and of $A_-$. Again, a pair of opposite chambers is contained in a unique twin apartment. Thus a strongly transitive action may equivalently be defined as an action that is transitive on pairs of opposite chambers. With these notions in place, the basic theory of twin buildings extends that of spherical buildings with minor modifications (the structure theory is richer, of course).

Just as strongly transitive actions of groups on buildings correspond to BN-pairs (as above), strongly  transitive actions of groups on twin buildings correspond to \emph{twin BN-pairs}: subgroups $B_+$, $B_-$, and $N$ which end up being the stabilizers of opposite chambers of $X_+$ and of $X_-$ and the setwise stabilizer of the twin apartment containing the two.

If $\bfG$ is a \emph{Tits functor} \cite[Def~7.47]{Marquis18} and $k$ is a field, the group $G = \bfG(k)$ admits a twin-BN-pair \cite[Thm.~7.69, Cor.~7.70]{Marquis18} and thus acts strongly transitively on a twin building $(X_+,X_-)$. The buildings $X_+$ and $X_-$ are locally finite (in the polyhedral sense when taking the Davis--Moussong realization) if $k$ is finite. The group $\bfG(k)$ is then countable discrete. However, (unless $\bfG$ is of spherical type so that $X_+$ is finite and $G$ is a finite group of Lie type) $\Aut(X_+)$ is non-discrete and so one may consider the closure of $\bfG(k)$ in $\Aut(X_+)$ to obtain a non-discrete locally compact group. In fact, besides this geometric completion there are various alternative completions that all lead to groups satisfying the hypotheses of Proposition~\ref{prop:group-theoretic}:

%\begin{proposition} \label{prop:kac-moody}
%  Let $\bfG$ be a Tits functor, let $k$ be a finite field, and let $G = \bfG(k)$ be the resulting Kac--Moody group. Let $\bar{G}$ be either the geometric, representation theoretic, the algebraic, or the scheme theoretic completion of $G$. 
%  Then $\bar{G}$ admits a continuous, isometric, smooth, proper, cocompact action on a locally compact finite dimensional ${\rm CAT}(0)$ space with a collection $\cala$ of closed convex subspace  satisfying the conditions~\ref{nl:transitive}, \ref{nl:discret}, and~\ref{nl:two-points}.
%\end{proposition}
%
%\begin{proof}
%  The group $\bar{G}$ admits a BN-pair with $B$ compact open.
%  This is shown in \cite{Marquis18} in Theorem~8.16 and Proposition~8.17 for the geometric completion, in Theorem~8.23 and Proposition~8.24 for the representation theoretic completion, in Theorem~8.33 and Proposition~8.34 for the algebraic completion, and in Theorem~8.84 and Proposition~8.85 for the scheme theoretic completion.
%  Thus Proposition~\ref{prop:group-theoretic} applies.
%\end{proof}

\begin{proposition} \label{prop:kac-moody_new}
  Let $\bfG$ be a Tits functor, let $k$ be a finite field, and let $G = \bfG(k)$ be the resulting Kac--Moody group.
  Let $\bar{G}$ be either the geometric, representation theoretic, the algebraic, or the scheme theoretic completion of $G$.
  
  Then $\bar{G}$ admits a  BN-pair with $B$ compact open. In particular $\bar{G}$ satisfies the
  satisfies the $\CVCYC$-Farrell--Jones conjecture.
\end{proposition}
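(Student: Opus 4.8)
The plan is to verify, for each of the four completions $\bar G$ named in the statement, that $\bar G$ carries a BN-pair with $B$ compact and open, and then invoke Proposition~\ref{prop:group-theoretic} (via Theorem~\ref{thm:FJ-for-BN}) to conclude the $\CVCYC$-Farrell--Jones conjecture. The key point throughout is that the discrete Kac--Moody group $G = \bfG(k)$ already acts strongly transitively on the positive building $X_+$ (equivalently, has a BN-pair $(B_+,N)$), so each completion should be set up precisely so that it continues to act strongly transitively on $X_+$ with the \emph{same} combinatorial chamber stabilizer $B_+$, now promoted to a compact open subgroup of the completed group. Concretely, I would first fix notation: let $(B_+,B_-,N)$ be the twin BN-pair of $G$ from \cite[Thm.~7.69, Cor.~7.70]{Marquis18}, so $B_+$ is the stabilizer of the fundamental chamber $C_+$ of $X_+$.

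The core of the argument is a uniform statement: each of the four completions is by construction a topological group $\bar G$ containing (a dense image of) $G$ and acting continuously and type-preservingly on $X_+$, such that the closure $\bar B_+$ of $B_+$ in $\bar G$ is the full stabilizer of $C_+$ and is compact open, while $N$ (or its image) still setwise stabilizes the fundamental apartment. I would treat the geometric completion as the base case: since $k$ is finite, Lemmas~\ref{lem:lf} and~\ref{lem:compact_B} (through Proposition~\ref{prop:geometric}) show $X_+$ is locally finite and the chamber stabilizer in $\operatorname{Aut}(X_+)$ is compact open; the geometric completion $\bar G$ is by definition the closure of the image of $G$ in $\operatorname{Aut}(X_+)$, so the stabilizer $B$ of $C_+$ in $\bar G$ is compact open and $(B,N)$ is a BN-pair for $\bar G$ because strong transitivity on pairs (chamber, apartment) passes to the closure. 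For the remaining three completions (representation theoretic, algebraic, scheme theoretic) I would appeal to the fact, recorded in the literature on Kac--Moody groups \cite[Ch.~8]{Marquis18}, \cite{Remy02}, that they all admit continuous surjections onto the geometric completion which are isomorphisms after passing to the closure compatibly with the BN-structure—or more directly, that each is defined together with a BN-pair whose $B$ is visibly a profinite (hence compact) open subgroup. In each case the verification reduces to: (i) $B$ is open (the defining topology makes the congruence-type filtration of $B$ a neighborhood basis of the identity), and (ii) $B$ is compact (it is an inverse limit of finite groups, $k$ being finite).

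With the BN-pair established in all four cases, the final sentence follows formally: Proposition~\ref{prop:group-theoretic} (the group-theoretic reformulation of Proposition~\ref{prop:geometric}) produces the required action on a locally compact finite dimensional ${\rm CAT}(0)$-space satisfying \ref{nl:transitive}, \ref{nl:discret}, and \ref{nl:two-points}, and then Proposition~\ref{prop:FJ-axiomatic} gives the $\CVCYC$-Farrell--Jones conjecture for $\bar G$; alternatively one cites Theorem~\ref{thm:FJ-for-BN} directly.

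I expect the main obstacle to be the uniform compact-openness claim across the four completions, since these are constructed by genuinely different mechanisms (completing the unipotent part geometrically, inside an endomorphism algebra of a highest-weight module, via a functorial algebraic group, or via a group scheme) and each comes with its own natural topology. The substantive work is to confirm that in every one of these constructions the fundamental chamber stabilizer $B$ is simultaneously open (equivalently, that $G/B$ carries the discrete topology, so that the set of chambers is discrete) and compact (equivalently profinite, using finiteness of $k$), rather than merely closed or merely open. I would handle this by quoting the comparison results among Kac--Moody completions in \cite{Marquis18} and \cite{Remy02}, which identify the relevant $B$'s with profinite groups and show the completions agree on the level of their action on $X_+$; the rest is then a direct application of the machinery already assembled in the earlier sections.
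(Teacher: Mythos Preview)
Your proposal is correct and follows essentially the same approach as the paper: establish that each completion carries a BN-pair with $B$ compact open by appealing to the literature, then apply Theorem~\ref{thm:FJ-for-BN}. The paper is simply more direct, citing specific theorem/proposition pairs from \cite[Ch.~8]{Marquis18} for each of the four completions rather than sketching the geometric case by hand and invoking comparison maps for the others.
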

\begin{proof}
  The group $\bar{G}$ admits a BN-pair with $B$ compact open.
  This is shown in \cite{Marquis18} in Theorem~8.16 and Proposition~8.17 for the geometric completion, in Theorem~8.23 and Proposition~8.24 for the representation theoretic completion, in Theorem~8.33 and Proposition~8.34 for the algebraic completion, and in Theorem~8.84 and Proposition~8.85 for the scheme theoretic completion.
  Finally apply Theorem~\ref{thm:FJ-for-BN}.
\end{proof}

  % \bibliographystyle{abbrv}
  % \bibliography{KM}
  %\bibliography{KM,dbdef,dbpub,dbpre}

\end{document}